\renewcommand{\journalname}{IEEE CONTROL SYSTEMS LETTERS}
\def\BibTeX{{\rm B\kern-.05em{\sc i\kern-.025em b}\kern-.08em
		T\kern-.1667em\lower.7ex\hbox{E}\kern-.125emX}}
\begin{document}
	\title{Input-to-State Safety With Control Barrier Functions}
	\author{Shishir Kolathaya, \IEEEmembership{Member, IEEE}, Aaron D. Ames, \IEEEmembership{Senior, IEEE}
		\thanks{Manuscript received March 6, 2018; revised May 26, 2018; accepted June 18, 2018. Date of publication July 6, 2018; date of current version July 23, 2018. This work was supported in part by the DST INSPIRE Faculty Fellowship under Grant IFA17-ENG212, and in part by NSF under Grant 1724464. Recommended by Senior Editor C. Prieur. (Corresponding author: Shishir Kolathaya.)}
		\thanks{S. Kolathaya is with the Robert Bosch Center for Cyber Physical Systems, Indian Institute of Science, Bengaluru 560012, India (e-mail: shishirk@iisc.ac.in).}
		\thanks{A. D. Ames is with the Department of Mechanical and Civil Engineering, California Institute of Technology, Pasadena, CA 91125 USA (e-mail: ames@caltech.edu).}
		\thanks{Digital Object Identiﬁer 10.1109/LCSYS.2018.2853698}
	}
	
	\maketitle
	\thispagestyle{empty}
	\pagestyle{empty}
	
	\begin{abstract}
This letter presents a new notion of {\it input-to-state safe control barrier functions} (ISSf-CBFs), which ensure safety of nonlinear dynamical systems under input disturbances. Similar to how safety conditions are specified in terms of forward invariance of a set, {\it input-to-state safety} (ISSf) conditions are specified in terms of forward invariance of a slightly larger set. In this context, invariance of the larger set implies that the states stay either inside or very close to the smaller safe set; and this closeness is bounded by the magnitude of the disturbances. The main contribution of the letter is the methodology used for obtaining a valid ISSf-CBF, given a control barrier function (CBF). The associated universal control law will also be provided. Towards the end, we will study unified quadratic programs (QPs) that combine control Lyapunov functions (CLFs) and ISSf-CBFs in order to obtain a single control law that ensures both safety and stability in systems with input disturbances.
	\end{abstract}
	
\begin{IEEEkeywords}
Safety critical control, barrier functions, input-to-state safety, autonomous systems.
\end{IEEEkeywords}

\section{Introduction}
\label{sec:introduction}

Real-time safety in dynamical systems has been receiving a lot of attention of late: \cite{7040372,7782377,7524935,7039737,XU201554}. Safety was initially studied in 2005, when {\it barrier certificates} were introduced in \cite{PRAJNA2005526} that certified whether a given dynamical system was safe or not. 
This was later adapted for real-time safety critical control via
{\it control barrier functions} (CBFs), which were first introduced in \cite{WIELAND2007462}. Yet these CBFs did not allow for safety to be imposed on top of an existing controller or in conjunction with stability conditions; both of which are necessary in robotic systems.

Real-time optimization based controllers can be implemented on robotic systems like quadrotors, automotive systems, and mobile robots due to the accessibility of high processing capability in remarkably small dimensions. With this access to technology, there were several key contributions in realizing a unifying controller that ensures both safety and stability via quadratic programs (QPs) \cite{7040372,7524935,XU201554}. In particular \cite{7040372,XU201554} developed a new notion of control barrier functions together with conditions that are necessary and sufficient for set invariance. Therefore, CLFs and CBFs can be encoded as constraints in a single QP that can either a) ensure both stability and safety or b) prioritize safety or stability over the other depending upon the applications. 

To date, this new notion of CBFs have been successfully implemented in automotive systems \cite{xu2017realizing}, flying systems \cite{wang2017safe}, multi-robot systems \cite{wang2017safety}, and also walking robots \cite{RSS2017_DiscreteTerrain_Walking}. It has also been observed that in all these systems uncertainties were a common occurrence, and we had no means to characterize them in a formal manner.
%
For example, in \cite{xu2017realizing}, safety was considered in the context of lane keeping, and a barrier function was used to encode lane boundaries. When the lanes were narrow, small errors in sensing lead to breaching of the lane limits. 
In \cite{wang2017safe}, safety was imposed between the agents in the form of radial distances, and delay in actuation and sensing resulted in collisions. A standard workaround to address these types of uncertainties is to allow some buffer in the margins, but, there is no existing literature that allows to make an estimate of this buffer while providing a means to study safety under uncertainties. This is contrary to the fact that there are existing notions of robustness for modeling and sensing based uncertainties in the field of stability \cite{sontag1995characterizations}.

Safety and stability have very similar properties and the construction of Lyapunov-like conditions for {\it barrier functions} (BFs) enabled the translation of concepts from the field of stability analysis to the domain of safety and characterizations thereof. There are key contributions in converse Lyapunov-like theorems \cite{7782377}, construction of barrier functions via sum of squares \cite{doi:10.1137/050645178}, and especially, robustness analysis via the notion of {\it input-to-state safety} (ISSf) \cite{7799254}. {\it Input-to-state safety} (ISSf) is the equivalent of {\it input-to-state stability} (ISS) \cite{sontag1995characterizations}, which is an elegant theory used to characterize stability of nonlinear systems under input disturbances. 

The main objective of this letter is to build upon the notion of ISSf presented in \cite{7799254}, extending it in the context of Lypunov-like characterizations of ISSf. Therefore our focus will be on the construction of \textbf{input-to-state safe (or safeguarding) control barrier functions} (ISSf-CBFs), which are crucial for robust implementations of real-time safety critical controllers in nonlinear systems. We will study CBFs, and the associated ISSf-CBFs, and also realize a unified quadratic program (QP) based formulation that ensures both safety and stability in nonlinear systems under input disturbances.






	We will first formally define the notion of \textbf{ input-to-state safety} (ISSf) w.r.t. sets. ISSf w.r.t. systems was originally defined in \cite{romdlony2017robustness}. Our choice for an alternative definition is motivated by the problem definition in \cite{7040372}. 
	Having defined ISSf, we will also define \textbf{ input-to-state safe control barrier functions} (ISSf-CBFs). Similar to how CBFs are constructed for ensuring {\it safety} of sets, we will construct ISSf-CBFs for ensuring ISSf of sets. We will establish that given a CBF, an associated ISSf-CBF can be constructed that always ensures that the states stay either inside or very close to the safe set.
	We will finally construct a \textbf{quadratic program} (QP) that contains both CLF and ISSf-CBF based constraints that results in a unified safeguarding-stabilizing controller under input disturbances. This will be further demonstrated in two examples.

A preliminary on CBFs will be provided in Section \ref{sec:cbf}. ISSf will be described in Section \ref{sec:issf}, ISSf-CBFs will be described in Section \ref{sec:issfcbf}, and finally, the unification of stability and input-to-state safety via QPs will be described in Section \ref{sec:unification}.
\section{Preliminary on control barrier functions}\label{sec:cbf}
In this section, we will study {\it barrier functions} and also state their relationships with forward invariance of a set (see \cite{7782377}, wherein the version we considered was called {\it zeroing barrier functions}). We consider a system of the form:
\begin{align}
\label{eq:system}
\dot x = f(x),
\end{align}
where $x\in \R^n$, $f: \R^n \to \R^n$ is locally Lipschitz. Given an initial condition $x_0 := x(t_0) \in \R^n$, there exists a maximum time interval $I(x_0) = [t_0, t_{\max})$ such that $x(t)$ is the unique solution to \eqref{eq:system} on $I(x_0)$; in the case when \eqref{eq:system} is forward complete, $t_{\max} = \infty$. A set $\mathcal{S}\subset \R^n$ is forward invariant w.r.t. \eqref{eq:system} if for every $x_0 \in \mathcal{S}$, $x(t) \in \mathcal{S}$ for all $t\in I(x_0)$. If $\mathcal{S}$ is forward invariant, then we call the set $\mathcal{S}$ {\it safe}.

Given a closed set $\mathcal{C}\subset \R^n$ (which is a strict subset of $\R^n$), we determine conditions such that it is forward invariant. $\mathcal{C}$ is defined as
\begin{align}
\label{eq:cset1}
	\mathcal{C} &= \{ x\in \R^n : h(x) \geq 0\},  \\
\label{eq:cset2}
   \partial	\mathcal{C} &= \{ x\in \R^n : h(x) = 0\}, \\
\label{eq:cset3}
	\mathrm{Int}(\mathcal{C}) &= \{ x\in \R^n : h(x) > 0\}, 
\end{align}
where $h:\R^n \to \R$ is a continuously differentiable function. It is also assumed that $\mathrm{Int}(\mathcal{C})$ is non-empty and $\mathcal{C}$ has no isolated points, i.e.,
	$\mathrm{Int}(\mathcal{C}) \neq \emptyset$, and $\overline{\mathrm{Int}(\mathcal{C})} = \mathcal{C}$.

\newsec{Notation.}\label{subsec:notation}
%
A continuous function $\alpha: [0,a) \to [0,\infty)$ for some $a>0$ is said to belong to {\it class} $\classK$ if it is strictly increasing and $\alpha(0)=0$. Here, $a$ is allowed to be $+\infty$.
%
A continuous function $\alpha : [0,\infty) \to [0,\infty)$ is said to belong to {\it class } $\classK_\infty$ if it is strictly increasing, $\alpha(0)=0$, and $\alpha(r) \to \infty$ as $r\to\infty$.
%
%
A continuous function $\alpha: (-b,c) \to (-\infty,\infty)$ is said to belong to extended class $\classK$ for some $b>0$, $c>0$ if it is strictly increasing and $\alpha(0)=0$ (see \cite[Definition 1]{XU201554}). Here again, $b$, $c$ are allowed to be $+\infty$.
To indicate the domains, we will denote class $\classK$ and extended class $\classK$ functions as $\classK_{[0,a)}$, $\classK_{(-b,c)}$ respectively.

Given the state $x$, we denote its Euclidean norm as $|x|$. 
For a signal $d:\R_{\geq 0} \to \R^m$, 
its $\mathbb{L}^m_\infty$ norm is given by $\|d\|_\infty : = \esssup_t |d(t)|$. 

\subsection{Barrier functions}
\label{subsection:zbf}
Given the set $\mathcal{C}$, our objective is to establish safety by taking into consideration a larger set $\mathcal{D} \subseteq \R^n$. 
This is similar in analogy to establishing local stability results for systems (see Remark \ref{rm:whyd}). By assuming that $\mathcal{D}$ is open, we have the following definition of a {\it barrier function} (BF). 
\begin{definition}\label{def:zbf}{\it
	For the dynamical system \eqref{eq:system}, a continuously differentiable function $h:\R^n \to \R$ is a {\bf barrier function} (BF) for the set $\mathcal{C}\subset\R^n$ defined by \eqref{eq:cset1}-\eqref{eq:cset3}, if there is an open set $\mathcal{D}$ with $\mathcal{C}\subset \mathcal{D} \subseteq \R^n$, an $\alpha\in\classK_{(-b,c)}$ with $b,c$ appropriately chosen, such that for all $x\in\mathcal{D}$, 
	\begin{align}
		L_f h(x) \geq - \alpha(h(x)).
	\end{align}}
\end{definition}
\noindent Here $L_f h$ is the Lie derivative of $h$ w.r.t. $f$. $b,c$ must be picked such that $h(x) \in (-b,c)$. 
See Remark \ref{rm:whyd}. 

\begin{remark}\label{rm:whyd}
	To illustrate the importance of $\mathcal{D}$ we consider the following differentiable function:
	\begin{align}
		h(x) = \left \{  \begin{array}{ccc}
		-1 & {\rm{if}} & x < - 1 \\
		\sin\left(\frac{\pi}{2}x\right) & {\rm{if}} & - 1 \leq x <  1 \\
		1 & {\rm{if}} & 1 \leq x 		
		\end{array}     \right . .
	\end{align}
	It can be verified that $\mathcal{C} = \{ x: x \geq 0 \}$. In addition, if $ x < - 1$, then $\frac{\partial  h}{\partial x} =0$. In other words, $h$ cannot be a valid BF for any $x\in (-\infty,-1)$. In order to {\it not} restrict our choices of $h$, we typically pick a smaller set $\mathcal{D}$ that contains $\mathcal{C}$ for $x$. For the example above, $\mathcal{D}=(-1,\infty)$.

	With this viewpoint, we will make a few of the notations precise (which will be useful for defining the comparison functions later on): 
	\begin{align}
	 \label{eq:bce}
	 b : = - \inf_{x\in \R^n} h(x), \quad
	 c : = \sup_{x\in \R^n} h(x) , \quad
	 e : = -\lim_{r\to -b} \alpha(r).
	\end{align}
Note that, here, $-b,c$ are the boundaries of the domain of the extended class $\classK$ function $\alpha$ introduced in Definition \ref{def:zbf}. This ensures that $\alpha(h(x))$ is well defined for all $x$. We will define $\mathcal{D}$ for the rest of the letter as
 	\begin{align}\label{eq:dset}
 		\mathcal{D} := \{ x\in \R^n : h(x) + b >0 \}.
 	\end{align}
\end{remark}

\subsection{Control barrier functions}\label{subsection:cbf}
Having defined the BF, $h$, we can now define {\it control barrier functions} (CBFs)\footnote{which were called {\it zeroing control barrier functions} in \cite{7782377}.}. Consider the affine control system:
\begin{align}
\label{eq:affinesystem}
	\dot x = f(x) + g(x) u,
\end{align}
with $f:\R^n \to \R^n$, $g:\R^n \to \R^{n\times m}$ being locally Lipschitz, $x\in \R^n$, and $u\in\ControlInput\subset\R^m$. When the set $\mathcal{C}$ is not forward invariant under the natural dynamics of the system, $\dot x = f(x)$, we are interested in the controller $k:\R^n \to \R^m$, that can be specified that will ensure invariance of $\mathcal{C}$. We call this controller a {\it safeguarding} controller w.r.t. the set $\mathcal{C}$. We can obtain a suitable safeguarding controller via CBFs.

\begin{definition}\label{def:zcbf}{\it
	Given a set $\mathcal{C}\subset \R^n$ defined by \eqref{eq:cset1}-\eqref{eq:cset3} for a continuously differentiable function $h:\R^n \to \R$, the function $h$ is called a {\bf control barrier function} (CBF) defined on the open set $\mathcal{D}$ \eqref{eq:dset} with $\mathcal{C}\subset\mathcal{D}\subseteq\R^n$, if there exists a set of controls $\mathbb{U}$, and an $\alpha\in\classK_{(-b,c)}$ such that for all $x\in \mathcal{D}$,
	\begin{align}\label{eq:zcbfeq}
		\sup_{u \in \ControlInput } \left [ L_f h(x) + L_g h(x) u \right ] \geq - \alpha(h(x)).
	\end{align}
}
\end{definition}
\noindent Here $L_fh$, $L_gh$ are the Lie derivatives. If $\mathcal{C}$ is compact, the BF $h$ not only ensures forward invariance, but also ensures asymptotic stability of $\mathcal{C}$ (see \cite[Proposition 4]{XU201554}). 

\section{Preliminary on input-to-state safety}\label{sec:issf}
The notion of {\it input-to-state safety} (ISSf) was first defined in \cite{7799254} and then in \cite{romdlony2017robustness}, wherein the problem formulation was slightly different, i.e., a set of unsafe states $\mathcal{D}_u\subset\R^n$ was defined, and the goal was to stay {\it away} from this unsafe set. 
%
In this manuscript, since our goal is to stay in the superlevel set $\mathcal{C}$ (see \cite[1-2]{XU201554} for the advantages), we will redefine the notion of ISSf for the problem definition and notations provided here (and also in \cite{7782377}). In addition, in formulations similar to the notion of {\it input-to-state stability} \cite{sontag1989smooth}, we consider a safeguarding controller  $k:\R^n \to \R^m$ and posit that additional disturbance $d$ is added to the safeguarding controller. This is similar to the construction of {\it input-to-state stabilizing} controllers in \cite{sontag1989smooth}. We intended to apply a safeguarding controller $k(x)$, but instead $k(x)+d(t)$ was applied to the actual control system \eqref{eq:affinesystem}. Accordingly, we have the following dynamical system: 
\begin{align}\label{eq:affinesystemredefined}
	\dot x = \bar f(x) + g(x) d(t), \:\: {\rm{where}} \:\: \bar f(x) := f(x) + g(x) k(x).
\end{align}
We make preliminary assumptions that $d\in \mathbb{L}_\infty^m$. 
Given this problem setup, the goal 
is to ensure that the states remain either in the superlevel set $\mathcal{C}$, or at least close to $\mathcal{C}$. The {\it closeness} to the superlevel set is directly related to the {\it smallness} of the disturbance input $d$.

We say that the set $\mathcal{C}$ is {\it safe} if it is forward invariant. Accordingly, we say that $\mathcal{C}$ is {\it input-to-state safe} (ISSf) if a slightly larger set $\mathcal{C}_d \supseteq \mathcal{C}$ is forward invariant. Similar to \eqref{eq:cset1}-\eqref{eq:cset3}, we define the set $\mathcal{C}_d$ as
 \begin{align}\label{eq:ce}
\mathcal{C}_d = \{ x\in\R^n : h(x) + \gamma (\|d\|_\infty) \geq 0 \}, \\
\label{eq:cdset2}
\partial \mathcal{C}_d = \{ x\in\R^n : h(x) + \gamma (\|d\|_\infty) = 0  \}, \\
\label{eq:cdset3}
\mathrm{Int}(\mathcal{C}_d) = \{ x\in\R^n : h(x) + \gamma (\|d\|_\infty) > 0 \},
\end{align}
for some $\gamma \in \classK_{[0,a)}$, and $\|d\|_\infty\leq \bar d \in [0,a)$. The constant $a$ satisfies $\lim_{r\to a} \gamma(r)=b$, which ensures that $\gamma(\bar d) < b$, which, in turn, ensures that $\mathcal{C}_d \subset \mathcal{D}$. It is also assumed that $\overline{\mathrm{Int}(\mathcal{C}_d)} = \mathcal{C}_d$. Having defined $\mathcal{C}_d$, we have the following formal definition for the set $\mathcal{C}$ being (locally) ISSf:
\begin{definition}\label{def:ce}
 {\it Given a set $\mathcal{C}\subset \R^n$ defined by \eqref{eq:cset1}-\eqref{eq:cset3} for a continuously differentiable function $h:\R^n \to \R$, and an open set $\mathcal{D}$ \eqref{eq:dset} with $\mathcal{C}\subset \mathcal{D}\subseteq \R^n$, the set $\mathcal{C}$ is called a {\bf (local) input-to-state safe} set, if there exists a $\gamma\in\classK_{[0,a)}$ satisfying $\lim_{r\to a} \gamma(r)=b$, and a constant $\bar d \in [0,a)$, such that for all $d$ satisfying $\|d\|_\infty \leq \bar d$, the set 
 $\mathcal{C}_d\subset \mathcal{D}$ defined by \eqref{eq:ce}-\eqref{eq:cdset3} 
is forward invariant. In other words, the set $\mathcal{C}$ is called an ISSf set if the set $\mathcal{C}_d$, which depends on $d$, is safe.}
\end{definition}
\begin{figure}[t!]\centering
	\includegraphics[width=0.9\columnwidth]{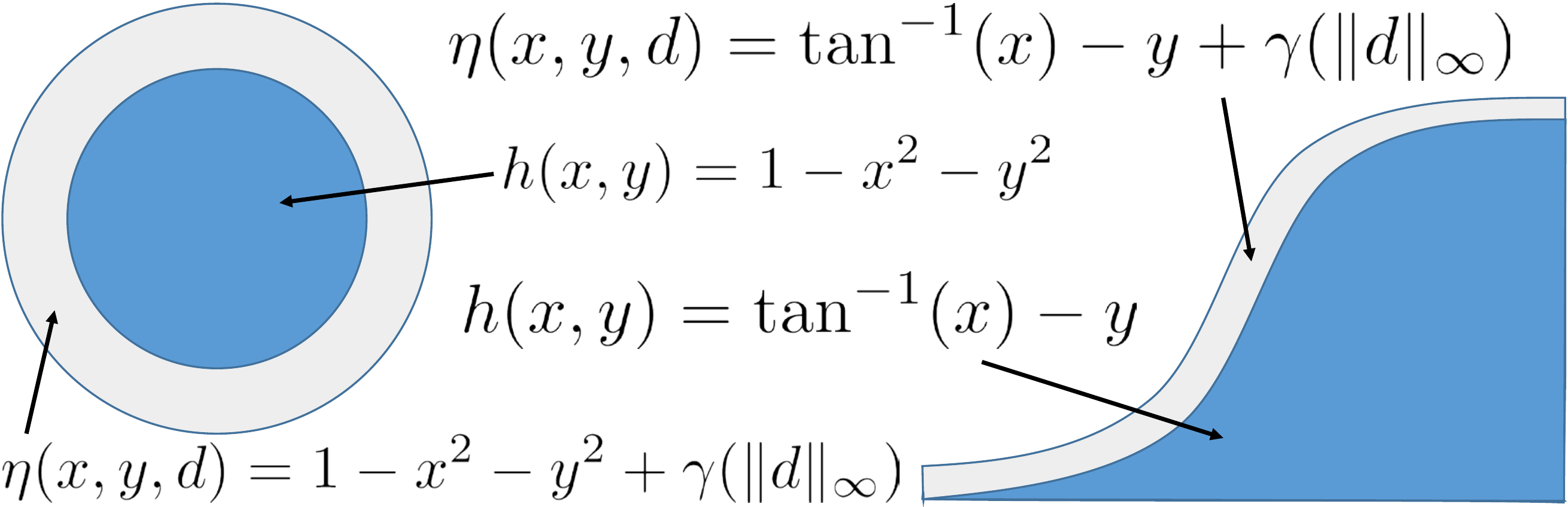}
	\caption{Figure showing some examples of safe and the corresponding  ISSf sets. Blue regions are $\mathcal{C}$, and grey+blue regions are $\mathcal{C}_d$.
	}
	\label{fig:examples}
\end{figure}
\begin{remark}\label{rm:globalissf}
	The above definition of ISSf is w.r.t. sets, and not w.r.t. systems (quite unlike the definitions from \cite{romdlony2017robustness}). See \figref{fig:examples} for some examples of ISSf sets. 
If $\|d\|_\infty \geq a$ with $a$ finite, then $\mathcal{C}_d$ may not necessarily be contained in $\mathcal{D}$, and Definition \ref{def:ce} is no longer valid. This is the reason for including $\bar d$, which is less than $a$. 
This can be extended for any arbitrary $\|d\|_\infty$ if the constants $a,b$ are $+\infty$. In other words, the definition is global if $\gamma \in \classK_{[0,\infty)}$ and $\inf_{x\in \R^n} h(x) = -\infty$.

The global notion of ISSf is less interesting due to the fact that large disturbances imply $\mathcal{C}_d$ is large, which, in turn, implies that the states are far into the unsafe zone. Therefore, we will omit the term {\it local} in the ensuing definitions and results for ISSf.
\end{remark}

To motivate the importance of ISSf, we will begin by studying a concrete example. 
\begin{example}\label{example:1}
	Consider the system
	\begin{align}
	\dot x = - x + x^2 u,
	\end{align}
	along with the safe set $\mathcal{C} = \{ x\in \R : h(x) = 2 - x \geq 0\}$. The goal is to ensure that $x(t) \leq 2$ for all $t$. It can be verified that $k(x)\equiv 0$ is, indeed, a safeguarding controller. We have that
	\begin{align}
	\dot h(x) = L_f h(x)  =  x \geq x - 2 =  - h(x),
	\end{align}
	which implies that $h(x(t)) \geq 0$, if $h(x(0)) \geq 0$. Even if $x$ starts from an unsafe zone, it can be verified that $x$ eventually enters the safe set $\mathcal{C}$.
	On the other hand, if a disturbance $d$ is added (i.e., $u=k(x)+d(t)$), we have the following:
	\begin{align}
	\dot h(x,d)  = x - x^2 d(t).
	\end{align}
	If $x_0 = x(0) = 2$ and $d(t)=1$, it can be verified that the state propagates in an unbounded fashion in the unsafe zone.
	\begin{figure}\centering
		\includegraphics[width=0.48\columnwidth]{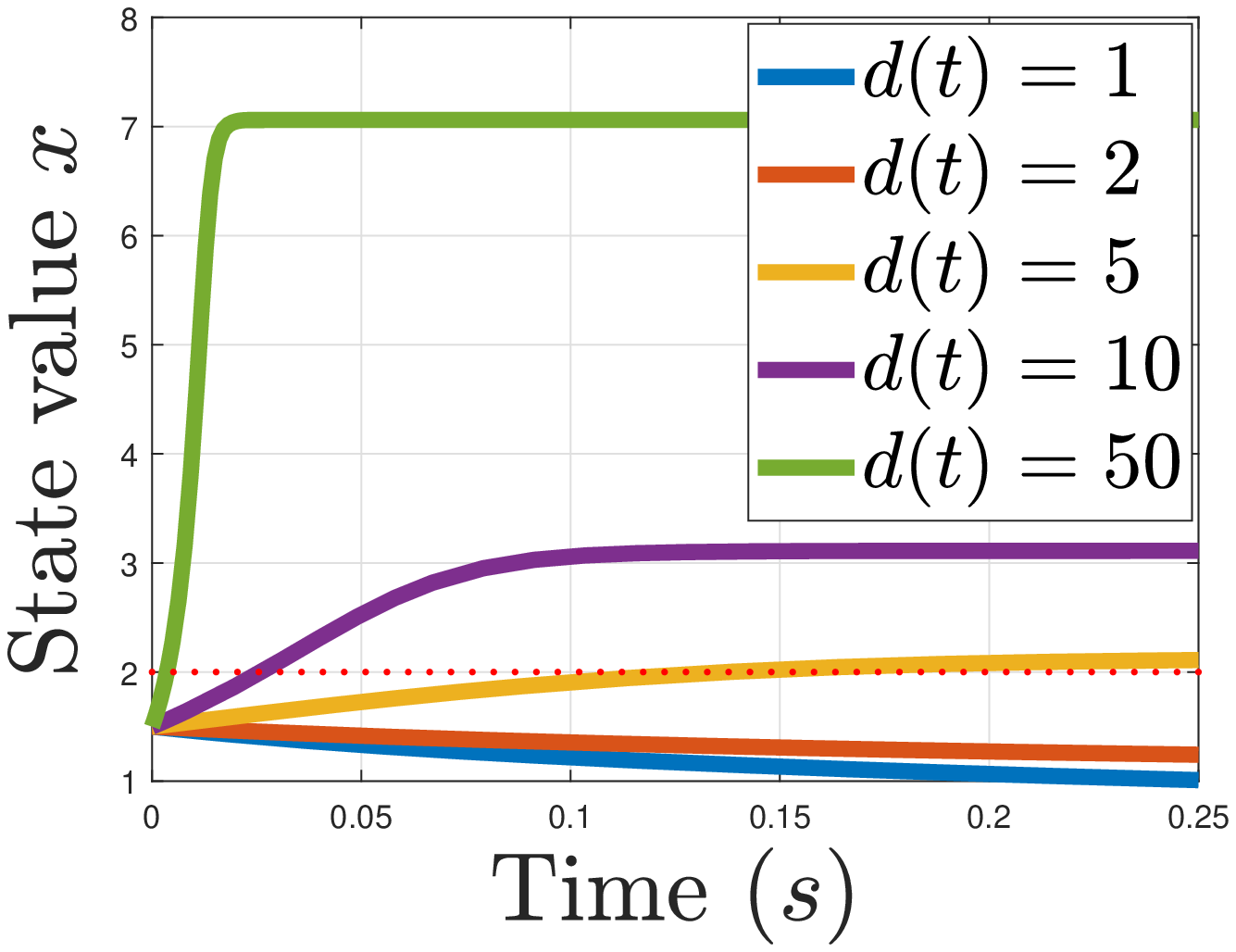}
		\includegraphics[width=0.48\columnwidth]{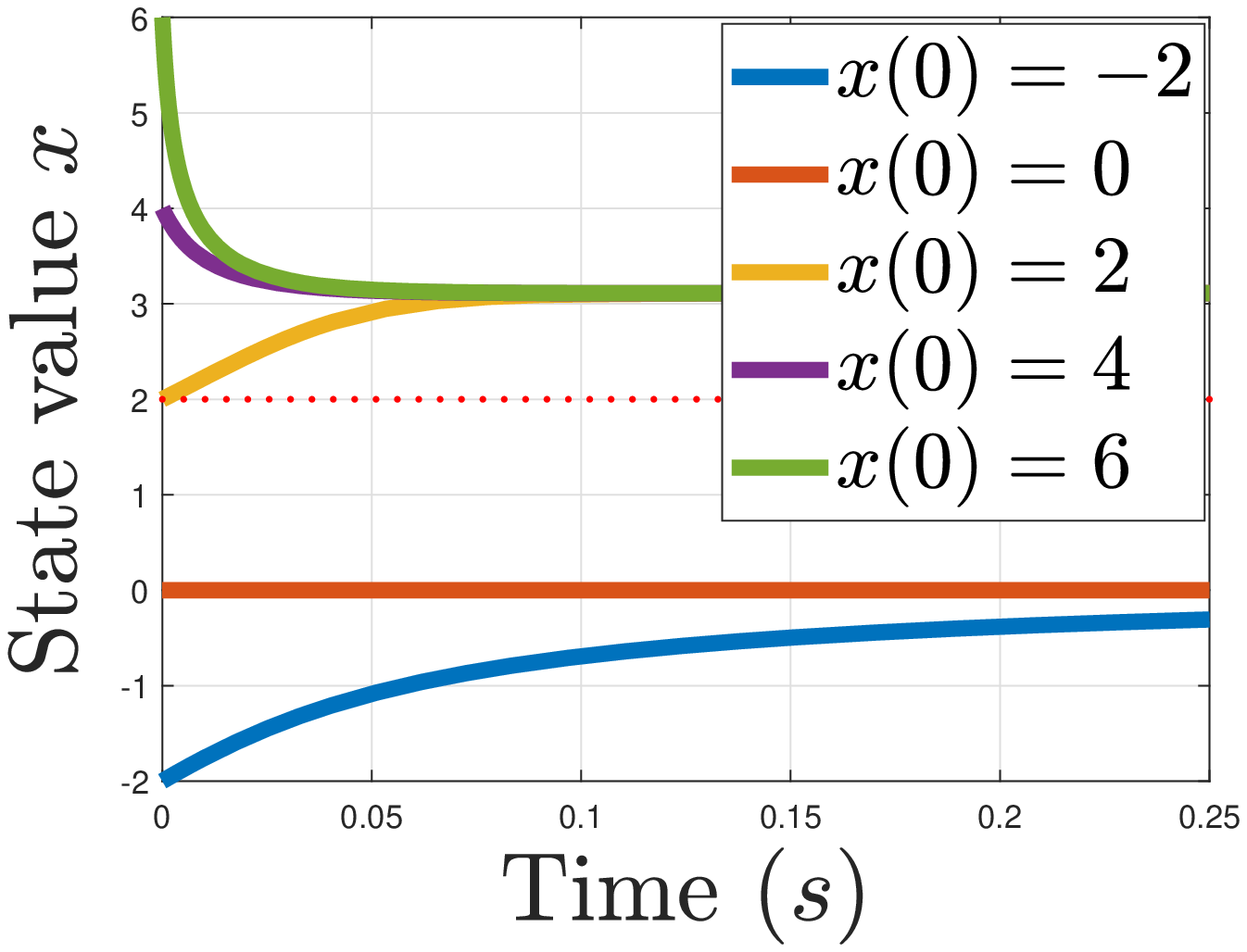}
		\caption{Figure showing the response for different values of input disturbances (left) and initial conditions (right) for a safeguarding controller of the type \eqref{eq:newsafeguarding}. The dashed line corresponds to $x=2$, the boundary of the safe set. 
		}
		\label{fig:example1}
	\end{figure}
	Despite the application of a safeguarding controller, addition of a small disturbance input can drive the states away from the safe set $\mathcal{C}$. In order to address this problem, we propose the following safeguarding controller:
	\begin{align}\label{eq:newsafeguarding}
	k(x) = L_g h(x) = -x^2,
	\end{align}
	which yields
	\begin{align}\label{eq:example1equations}
	\dot h(x,d) &= x + x^4 - x^2 d(t)  \nonumber \\
	&=  x + x^4 - x^2 d(t) + \frac{1}{4} d(t)^2 -\frac{1}{4} d(t)^2 \nonumber \\
	 & \geq x - \frac{1}{4}\|d\|^2_\infty,
	\end{align}
	where the disturbance is replaced with its norm. It can be verified that the states will either stay close to or enter the safe zone for small values of $d$ and as $\|d\|_\infty \to 0$ the state $x$ eventually enters the safe zone (see \figref{fig:example1}). This is the type of formulation we are interested in, and we will use this as the motivation to construct {\it input-to-state safe barrier functions} (ISSf-BFs).
\end{example}
\begin{remark}
If a controller is applied such that the resulting set $\mathcal{C}_d$ is rendered safe, we call this controller an {\it input-to-state safeguarding} controller. It can be observed that the new controller \eqref{eq:newsafeguarding} is, in fact, an {\it input-to-state safeguarding} controller, which will be the basis for the main result here.
\end{remark}

\subsection{Input-to-state safe barrier function}
Having defined the notion of ISSf, we have the following definition of {\it input-to-state safe barrier function} (ISSf-BF). 
\begin{definition}\label{def:issfbf}{\it
Given the dynamical system \eqref{eq:affinesystemredefined}, a continuously differentiable function $h:\R^n \to \R$ is an {\bf input-to-state safe barrier function} (ISSf-BF) for the set $\mathcal{C}\subset \R^n$ defined by \eqref{eq:cset1}-\eqref{eq:cset3}, if there exists an open set $\mathcal{D}$ \eqref{eq:dset} with $\mathcal{C}\subset \mathcal{D} \subseteq \R^n$, an $\alpha\in\classK_{(-b,c)}$, an $\iota\in\classK_{[0,a)}$ satisfying $\lim_{r\to a} \iota(r)=e$, and a constant $\bar d \in [0,a)$, such that $\forall$ $x\in\mathcal{D}$, $\forall$ $\mu\in\R^m$ satisfying $|\mu| \leq \bar d$,
\begin{align}\label{eq:issfzbf}
L_{\bar f} h(x) + L_{g} h(x) \mu \geq - \alpha(h(x)) - \iota(|\mu|).
\end{align}}
\end{definition}
Here $L_{\bar f} h$ is the Lie derivative of $h$ w.r.t. $\bar f$. We have the following result:
\begin{theorem}\label{thm:ceifhisissf}{\it
Given the dynamical system \eqref{eq:affinesystemredefined}, a set $\mathcal{C}\subset\R^n$ defined by \eqref{eq:cset1}-\eqref{eq:cset3} for some continuously differentiable function $h:\R^n \to \R$, and the set $\mathcal{C}_d$ defined by \eqref{eq:ce}-\eqref{eq:cdset3} for some $\gamma\in\classK_{[0,a)}$ satisfying $\lim_{r\to a}\gamma(a)=b$, and $\bar d\in[0,a)$, if $h$ is an ISSf-BF defined on the open set $\mathcal{D}$ \eqref{eq:dset} with $\mathcal{C}\subset \mathcal{D} \subseteq \R^n$, then the set $\mathcal{C}$ is ISSf.}
\end{theorem}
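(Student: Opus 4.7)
My plan is to turn the ISSf-BF differential inequality into a scalar comparison inequality along trajectories and then use the comparison principle (Nagumo-style forward-invariance argument) to get invariance of $\mathcal{C}_d$.

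First, I would fix the relationship between $\gamma$ and the pair $(\alpha,\iota)$ supplied by the ISSf-BF. Since $\alpha\in\classK_{(-b,c)}$ is strictly increasing, it is invertible on its image, and the definitions in \eqref{eq:bce} give $\lim_{r\to -b}\alpha(r)=-e$. Setting
\[
\gamma(r)\;:=\;-\alpha^{-1}\!\bigl(-\iota(r)\bigr),\qquad r\in[0,a),
\]
I would check that this candidate $\gamma$ is continuous, $\gamma(0)=0$, strictly increasing, and satisfies $\lim_{r\to a}\gamma(r)=b$ (using $\lim_{r\to a}\iota(r)=e$ and the limit of $\alpha$ at $-b$). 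Hence $\gamma\in\classK_{[0,a)}$ with the hypothesized limit, so the corresponding set $\mathcal{C}_d$ in \eqref{eq:ce}--\eqref{eq:cdset3} is well-defined and contained in $\mathcal{D}$ for any $\bar d\in[0,a)$.

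Next, I would substitute $\mu=d(t)$ into the ISSf-BF inequality \eqref{eq:issfzbf}. Along any solution of \eqref{eq:affinesystemredefined} with $\|d\|_\infty\leq\bar d$, and as long as $x(t)\in\mathcal{D}$,
\[
\dot h(x(t))\;=\;L_{\bar f}h(x(t))+L_g h(x(t))\,d(t)\;\geq\;-\alpha(h(x(t)))-\iota(|d(t)|)\;\geq\;-\alpha(h(x(t)))-\iota(\|d\|_\infty).
\]
Set $y(t):=h(x(t))$ and consider the scalar comparison ODE $\dot z=-\alpha(z)-\iota(\|d\|_\infty)$. Its unique equilibrium $z^\star$ solves $\alpha(z^\star)=-\iota(\|d\|_\infty)$, giving precisely $z^\star=-\gamma(\|d\|_\infty)$; because $\alpha$ is strictly increasing, $\dot z<0$ for $z>z^\star$ and $\dot z>0$ for $z<z^\star$, so $z^\star$ is (globally, within the domain of $\alpha$) asymptotically stable and any solution starting at $z(0)\geq z^\star$ stays $\geq z^\star$ for all forward time.

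Finally, I would close the argument. Given $x_0\in\mathcal{C}_d$, i.e.\ $y(0)\geq -\gamma(\|d\|_\infty)=z^\star$, I run the comparison lemma with initial condition $z(0)=y(0)$: the differential inequality $\dot y\geq -\alpha(y)-\iota(\|d\|_\infty)$ yields $y(t)\geq z(t)\geq z^\star$ for all $t\in I(x_0)$. Hence $h(x(t))+\gamma(\|d\|_\infty)\geq 0$, so $x(t)\in\mathcal{C}_d$ for all $t$ in the maximal interval; combined with $\mathcal{C}_d\subset\mathcal{D}$ (which keeps the ISSf-BF inequality legal), this also shows the trajectory does not exit $\mathcal{D}$, so standard extension arguments give $t_{\max}=\infty$ (or at least forward invariance on the maximal existence interval), proving $\mathcal{C}$ is ISSf.

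The main obstacle I expect is the bookkeeping at the boundaries of the domains of $\alpha$ and $\iota$: I must ensure that $\gamma(\|d\|_\infty)<b$ so that $-\gamma(\|d\|_\infty)$ lies strictly inside $(-b,c)$ where $\alpha$ is defined, and that trajectories starting in $\mathcal{C}_d$ cannot drift into a region where either $h(x)\notin(-b,c)$ or $x\notin\mathcal{D}$. The constraint $\bar d<a$ together with $\lim_{r\to a}\gamma(r)=b$ is exactly what is needed, and invoking Nagumo-type reasoning (rather than an arbitrary comparison system) avoids any circularity with the existence interval. Everything else — verifying $\gamma\in\classK_{[0,a)}$, applying the comparison lemma, and reading off forward invariance — is routine.
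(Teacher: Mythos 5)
Your proposal is correct and takes essentially the same route as the paper: your choice $\gamma(r)=-\alpha^{-1}\bigl(-\iota(r)\bigr)$ is exactly the paper's $\gamma=\beta^{-1}\circ\iota$ with $\beta(r):=-\alpha(-r)$, and both arguments then reduce to forward invariance of $\mathcal{C}_d$ under the constraint $\gamma(\bar d)<b$. The only cosmetic difference is that you carry out an explicit scalar comparison-lemma argument for $y(t)=h(x(t))$, whereas the paper works with $\eta=h+\gamma(\|d\|_\infty)$, checks $\dot\eta\geq 0$ on $\partial\mathcal{C}_d$, and invokes the Nagumo-type invariance result it cites (Proposition 1 of \cite{7782377}).
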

\begin{proof}
	We need to prove that the set $\mathcal{C}_d$ defined by \eqref{eq:ce}-\eqref{eq:cdset3}
	for some $\gamma\in\classK_{[0,a)}$, $\bar d\in[0,a)$, and for all $d$ satisfying $\|d\|_\infty\leq \bar d$, is forward invariant. 
	Given the disturbance $d$, define the new function
	\begin{align}\label{eq:etafunction}
	\eta(x,d) : = h(x) + \gamma(\|d\|_\infty).
	\end{align}
	Since $h$ is an ISSf-BF, we have the following from \eqref{eq:issfzbf}:
	\begin{align}
	\label{eq:etaderivative}
	\dot \eta(x,d)  = \dot h(x,d) 
	& \geq - \alpha(h(x)) - \iota(\|d\|_\infty)  \\
	& = - \alpha  (\eta(x,d) -  \gamma(\|d\|_\infty)) - \iota(\|d\|_\infty), \nonumber
	\end{align}
	where $\eta$ is substituted for $h$. The next steps are similar to proof of \cite[Proposition 1]{7782377}, where we consider the set $\partial \mathcal{C}_d$ \eqref{eq:cdset2}.
	If $x\in \partial \mathcal{C}_d$, then $\eta=0$, and \eqref{eq:etaderivative} reduces to
	\begin{align}\label{eq:gammavalue}
	\dot \eta(x,d) \geq - \alpha  (-  \gamma(\|d\|_\infty)) - \iota(\|d\|_\infty).
	\end{align}
	Substitute for $\beta(r):=- \alpha  (-  r)$, which is a valid class $\classK$ function\footnote{$\beta(0)=0$. If $r_1>r_2>0$, then $-\alpha(-r_1)>-\alpha(-r_2)>0$. The domain of $\beta$ is $[0,b)$ and range is $[0,e)$. Therefore, for $\beta^{-1}$, the domain and range are flipped, which implies that $\beta^{-1} \circ \iota$ is well defined.} only if $r<b$. Therefore, we will pick $\gamma = \beta^{-1} \circ \iota$, and a small enough $\bar d$ such that the following is satisfied
	\begin{align}\label{eq:bardbinequality}
	\beta^{-1} \circ \iota(\bar d) < b.
	\end{align}
	Rest of the proof follows \cite[Proposition 1]{7782377}, i.e., $\dot \eta\geq0$ for $\eta=0$ $\Rightarrow$ $\mathcal{C}_d$ is invariant.
\end{proof}
\begin{remark}\label{rm:issfbfremark}
	The above result can also be applied for {\it exponential}-type barrier functions with $\alpha(h(x)) := \lambda h(x)$ for some $\lambda >0$. 
	This can be substituted in \eqref{eq:etaderivative} to obtain the following invariant set:
	\begin{align}
	\mathcal{C}_d =  \{ x\in\R^n: h(x) + \frac{1}{\lambda} \iota (\|d\|_\infty) \geq 0 \}.
	\end{align}
\end{remark}
It can be verified that for Example \ref{example:1}, $\mathcal{C}_d = \{x : 2 -x + \frac{\|d\|_\infty^2}{4} \geq 0\}$. We will now study ISSf-CBFs that guarantee ISSf of $\mathcal{C}$.

\section{Input-to-state safe control barrier functions}\label{sec:issfcbf}

We will first provide a formal definition for 
{\it input-to-state safe control barrier function} (ISSf-CBF).


\begin{definition}\label{def:issfcbf}{\it
	Given a set $\mathcal{C}\subset \R^n$ defined by \eqref{eq:cset1}-\eqref{eq:cset3} for a continuously differentiable function $h:\R^n \to \R$, the function $h$ is called an {\bf input-to-state safe control barrier function} (ISSf-CBF) defined on the open set $\mathcal{D}$ \eqref{eq:dset} with $\mathcal{C}\subset\mathcal{D}\subseteq\R^n$, if there exists a set of controls $\ControlInput$, an $\alpha\in\classK_{(-b,c)}$, an $\iota\in\classK_{[0,a)}$ satisfying $\lim_{r\to a} \iota(r)=e$, and a constant $\bar d \in [0,a)$, such that $\forall$ $ x\in \mathcal{D}$, $\forall$ $\mu\in\R^m$ satisfying $|\mu| \leq \bar d$,
	\begin{align}\label{eq:issfzcbf}
	\sup_{u \in \ControlInput } \left [ L_f h(x) + L_g h(x) (u+\mu)  \right ] \geq - \alpha(h(x)) - \iota(|\mu|).
	\end{align}
}
\end{definition}


Motivated by constructions developed by Sontag, specifically \cite[equations $(23)$ and $(32)$]{sontag1989smooth}, we can construct ISSf-CBFs in the following manner. 
Given a safeguarding controller $k(x)$, we consider the following controller, which we claim to render the set $\mathcal{C}$ ISSf:
\begin{align}\label{eq:clfcontroller2}
 u(x)  = k(x) +  L_g h(x)^T,
\end{align}
which, incidentally, was also utilized in Example \ref{example:1}. Based on this controller, we have the following theorem which defines a {\it new} ISSf-CBF that renders $\mathcal{C}$ ISSf.
\begin{theorem}\label{lm:zcbflemma1}{\it
Given a set $\mathcal{C}\subset \R^n$ defined by \eqref{eq:cset1}-\eqref{eq:cset3} for a continuously differentiable function $h:\R^n \to \R$, an open set $\mathcal{D}$ \eqref{eq:dset} with $\mathcal{C}\subset \mathcal{D} \subseteq \R^n$, and a set of controls $\ControlInput$, if $h$ satisfies
  \begin{align} \label{eq:newzcbf}
  &  \sup_{u \in \ControlInput }  [ L_f h(x) + L_g h(x) u - L_g h(x) L_g h(x)^T ] \geq - \alpha(h(x)),
 \end{align}
 for some $\alpha\in\classK_{(-b,c)}$, and for all $x\in\mathcal{D}$,
 then $h$ is an ISSf-CBF defined on the set $\mathcal{D}$.}
\end{theorem}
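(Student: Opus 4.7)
The idea is to take an arbitrary admissible disturbance $\mu$, introduce $\pm L_g h(x) L_g h(x)^T$ into the expression $L_f h(x) + L_g h(x)(u+\mu)$, and match what remains against the hypothesis by completing a square in $\mu$. The leftover term from the completion will serve as the candidate $\iota(|\mu|)$, and a short check against Definition \ref{def:issfcbf} will finish the argument.

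Concretely, for every $x\in\mathcal{D}$, every $u\in\ControlInput$, and every $\mu\in\R^m$, I would write
\begin{align*}
L_f h(x) + L_g h(x)(u+\mu) = \bigl[L_f h(x) + L_g h(x) u - L_g h(x) L_g h(x)^T\bigr] + \bigl[L_g h(x) L_g h(x)^T + L_g h(x)\mu\bigr].
\end{align*}
Setting $v := L_g h(x)^T \in \R^m$, the second bracket equals $|v|^2 + v^T\mu$. Cauchy--Schwarz gives $v^T\mu \ge -|v|\,|\mu|$, and completing the square yields $|v|^2 - |v|\,|\mu| = (|v| - |\mu|/2)^2 - |\mu|^2/4 \ge -|\mu|^2/4$. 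Taking the supremum over $u\in\ControlInput$ on both sides and invoking the hypothesis \eqref{eq:newzcbf}, I obtain
\begin{align*}
\sup_{u\in\ControlInput}\bigl[L_f h(x) + L_g h(x)(u+\mu)\bigr] \;\ge\; -\alpha(h(x)) - \tfrac{1}{4}|\mu|^2,
\end{align*}
which is exactly the inequality \eqref{eq:issfzcbf} with the choice $\iota(r) := r^2/4$.

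It then remains to verify that this $\iota$, together with a suitable $\bar d$, satisfies the remaining clauses of Definition \ref{def:issfcbf}. Since $\iota(r) = r^2/4$ is continuous, strictly increasing on $[0,\infty)$, and vanishes at $0$, it belongs to $\classK_{[0,a)}$ for any $a > 0$; I would pick $a$ so that $\lim_{r\to a} \iota(r) = e$, namely $a := 2\sqrt{e}$ when $e$ is finite and $a := +\infty$ otherwise. Any $\bar d \in [0,a)$ is then admissible, and together with $\alpha$ (inherited from the hypothesis) and this $\iota$ it exhibits $h$ as an ISSf-CBF on $\mathcal{D}$.

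The main obstacle is not the inequality itself---once the completion of squares is arranged, it drops out in one line---but the bookkeeping around the domain of $\iota$: ensuring that the quadratic residual produced by completion of squares is compatible with the extended-class-$\classK$ domain $(-b,c)$ of $\alpha$ through the constant $e$ introduced in \eqref{eq:bce}, so that the constructed pair $(\iota,\bar d)$ simultaneously fits Definition \ref{def:issfcbf} and, via Theorem \ref{thm:ceifhisissf}, actually renders $\mathcal{C}$ input-to-state safe under the controller \eqref{eq:clfcontroller2}.
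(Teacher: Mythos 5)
Your proposal is correct and follows essentially the same route as the paper's proof: introduce $\pm L_g h(x)L_g h(x)^T$, apply the hypothesis \eqref{eq:newzcbf}, and complete the square to obtain the residual $\tfrac{1}{4}|\mu|^2$, i.e.\ $\iota(r)=r^2/4$ in \eqref{eq:issfzcbf}. If anything, your version is slightly tidier, since you work pointwise in $\mu$ as Definition \ref{def:issfcbf} requires and spell out the domain bookkeeping for $\iota$, which the paper leaves implicit.
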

\begin{proof}
 After substituting \eqref{eq:newzcbf} into the derivative of $h$:
 \begin{align}
  \dot h(x,d) &= \sup_{u \in \ControlInput }  [ L_f h(x) + L_g h(x) (u + d(t)) ] \\
		&\geq -\alpha(h(x)) + L_g h(x) L_g h(x)^T + L_g h(x) d(t) \nonumber \\
 &\geq -\alpha(h(x)) + |L_g h(x) |^2 - |L_g h(x)| \|d\|_\infty, \nonumber
\end{align}
since $L_g h L_g h^T = |L_g h|^2$. Adding and subtracting $\frac{1}{4}\|d\|^2_\infty$ yields
\begin{align}\label{eq:issinequality1}
\dot h(x,d)	&\geq -\alpha(h(x)) + \left( |L_g h(x) | - \frac{\|d\|_\infty}{2}\right)^2 -  \frac{\|d\|^2_\infty}{4}  \nonumber \\
		&\geq -\alpha(h(x)) - \frac{\|d\|^2_\infty}{4} ,
\end{align}
which is of the form \eqref{eq:issfzcbf}. 
\end{proof}
\begin{remark}\label{rm:univeralformula}
 Similar to the universal stabilization formula by Sontag \cite{sontag1989universal}, we provide here the universal input-to-state safeguarding formula by using \eqref{eq:clfcontroller2} and Theorem \ref{lm:zcbflemma1}:
 \begin{small}
\begin{align}
	 &u(x) = \left \{ \begin{array}{ll}
		 		0 & \!\! {\rm{if}}  \: B(x) =0 \\
	 			\frac{\left (  - A(x) + \sqrt{A(x)^2 + |B(x)|^4} \right )}{B(x)^T B(x)} B(x) + B(x)  & {\rm{otherwise}}
 				\end{array}   \!\! \right. \nonumber  
\end{align}
\end{small}
Here $A(x) = L_f h(x) + \alpha(h(x))$, $B(x) =  L_g h(x)^T$. It can be verified that $A(x) \geq 0$ whenever $B(x)=0$\footnote{On a different note, if $B(x)\neq 0$ always, then we can use a min-norm controller (like in \cite[(22)-(24)]{7040372}), which is always preferred due to its optimality.}.
\end{remark}
\section{Control Lyapunov Functions and Control Barrier Functions}\label{sec:unification}

In this section, we will study the union of stability and safety i.e., the union of stabilization via control Lyapunov functions (CLFs) and safeguarding via control barrier functions (CBFs). 
These results were well established by \cite{7039737}, and by \cite{7040372}, and the goal here is to extend them for inputs with disturbances.


Consider the system of the form \eqref{eq:affinesystem}, and the corresponding CBF (Definition \ref{def:zcbf}). We know that the set $\mathcal{C}$ defined by \eqref{eq:cset1}-\eqref{eq:cset3} is safe when a safeguarding controller is applied. If in addition, we have a  stabilization problem, we utilize control Lyapunov functions (CLFs). 
\begin{definition}{\it
 \label{def:aclfdefinition}
 A  continuously differentiable function $V :\R^{n} \to \R_{\geq 0}$ is a {\it control Lyapunov function} (CLF), if there exists a set of controls $\ControlInput\subset\R^m$, and class $\classK_\infty$ functions $\underline \alpha,\bar \alpha, \alpha_v$, such that for all $x\in \mathcal{D}$ with $\mathcal{C}\subset\mathcal{D}\subseteq \R^n$,
 \vspace{-2mm}
  \begin{align}\label{eq:aclfdefinition}
  & \underline\alpha( |x|) \leq V(x) \leq  \bar \alpha( |x|) \nonumber \\
  & \inf_{u \in \ControlInput }  [ L_f V(x) + L_g V(x) u ] \leq - \alpha_v(|x|).
 \end{align}}
\end{definition}
\noindent Here $L_fV$ and $L_gV$ are the Lie derivatives. 
Given a CLF $V$ and a BF $h$, they can be combined into a single controller through the use of a quadratic program (QP) in the following manner \cite[Section III.B]{7040372}.

\begin{small}
\vspace{-2mm}
\begin{align}
\label{eq:CLFCBFQP}
	u^*(x) = & \underset{{\mathbf{u} = (u,\delta)\in \R^{m+1}}}{\mathrm{arg}\min} \quad \frac{1}{2} \mathbf{u}^T H(x) \mathbf{u} + F^T(x) \mathbf{u} \tag{QP}  \\
			& \mathrm{s}.\mathrm{t}. \nonumber \\
			\tag{CLF}
			& L_f V(x) + L_g V (x) u \leq - \alpha_v(|x|) + \delta \nonumber \\
			\tag{CBF}
			& L_f h(x)  + L_g h(x) u \geq - \alpha(h(x)),
\end{align}
\vspace{-2mm}
\end{small}

\noindent where here $H(x) \in \R^{(m+1) \times (m+1)}$ and $F(x) \in R^{m+1}$ are arbitrary smooth functions that can be chosen based on the type of control inputs. $\delta>0$ is the relaxation term used to ensure feasibility of the QP. It can be verified that this type of control law $u^*(x)$, with $H$ positive definite, is Lipschitz continuous and renders the set $\mathcal{C}$, defined by $h$, forward invariant \cite[Theorem 2]{7040372}. It can also be verified that \eqref{eq:CLFCBFQP} may not necessarily guarantee forward invariance of $\mathcal{C}$ under input disturbances\footnote{Due to uncertainties in the system, an unknown disturbance term $d(t)$ gets added to the control law obtained from \eqref{eq:CLFCBFQP}.} 
(see Example \ref{ex:arctan}). We will therefore utilize the following QP formulation:

\begin{small}
\begin{align}
\label{eq:ISSfCLFCBFQP}
u^*(x) = & \underset{{\mathbf{u} = (u,\delta)\in \R^{m+1}}}{\mathrm{arg}\min} \quad \frac{1}{2} \mathbf{u}^T H(x) \mathbf{u} + F^T(x) \mathbf{u} \tag{ISSf-QP}  \\
& \mathrm{s}.\mathrm{t}. \nonumber \\
& L_f V(x) + L_g V (x) u \leq - \alpha_v(|x|) + \delta  \nonumber \\
\label{eq:ISSfCBF}
\tag{ISSf-CBF}
& L_f h(x)  + L_g h(x) u - \epsilon L_g h(x) L_g h(x)^T \geq - \alpha(h(x)),
\end{align}
\end{small}

\begin{figure}[t!]\centering
	\includegraphics[width=0.46\columnwidth]{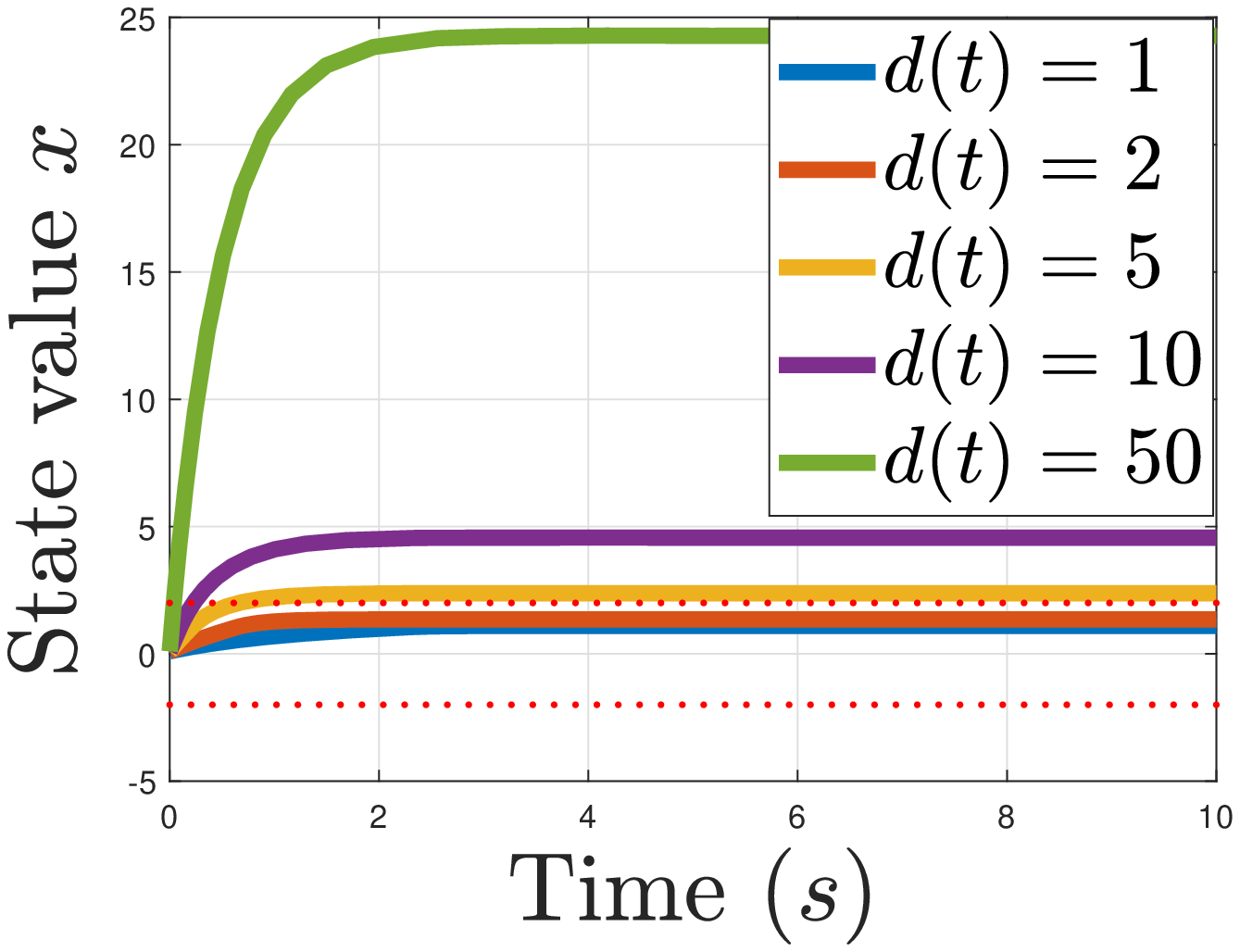}
	\includegraphics[width=0.46\columnwidth]{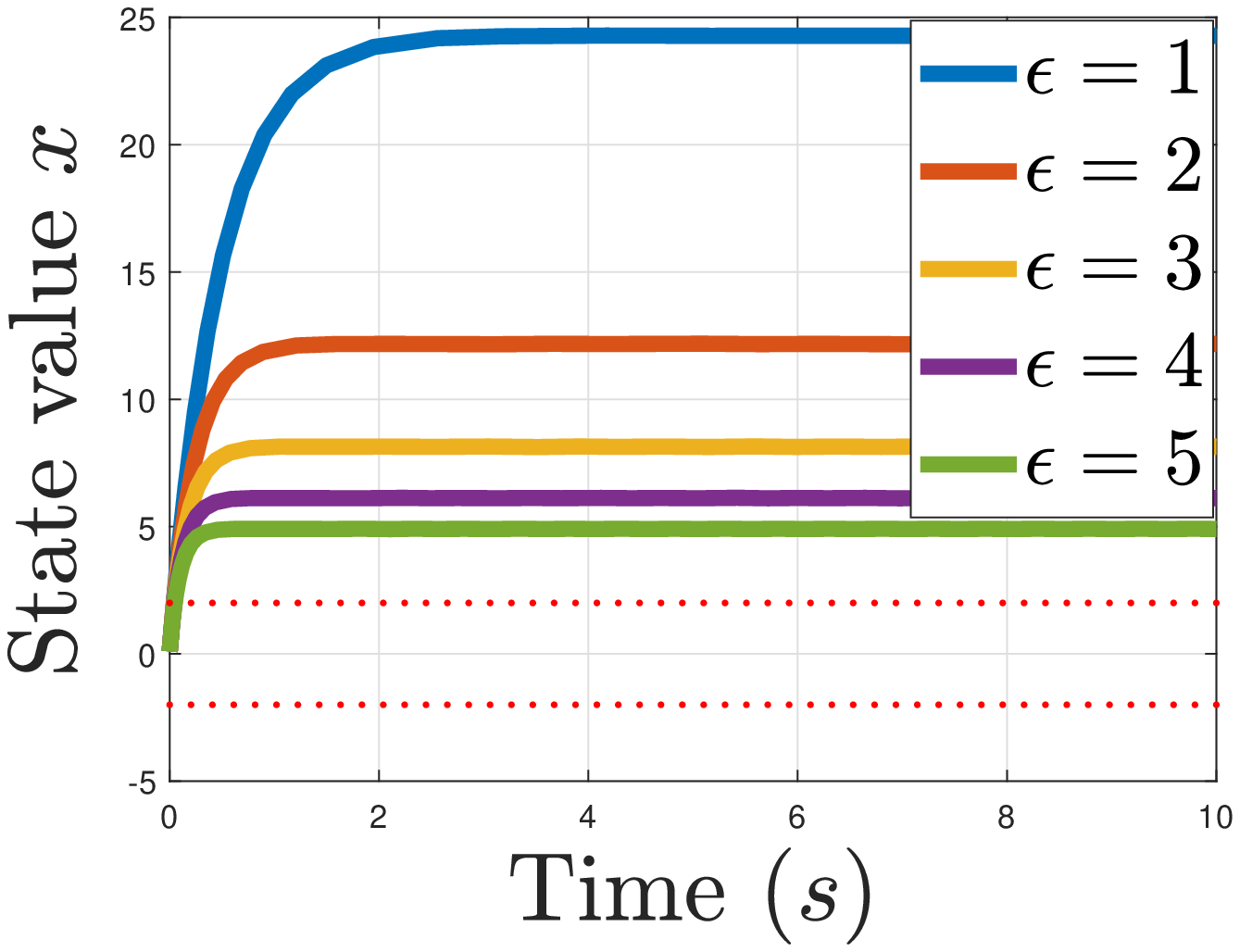}
	\caption{Figure showing the response for different values of input disturbances (left) and $\epsilon$ (right) for the controller of the type \eqref{eq:ISSfCLFCBFQP}. $H$ was chosen to be a constant diagonal matrix, and $F=0$. The dashed lines correspond to $x=\pm 2$, the boundary of the safe set. The plots show responses to constant values of $d$'s, and the boundedness is true for all bounded functions of time $d(t)$.}
	\label{fig:arctanexample}
\end{figure}

\noindent which will ensure forward invariance of a slightly larger set $\mathcal{C}_d$. Note the inclusion of a new user defined $\epsilon > 0$ in \eqref{eq:ISSfCBF}, which indirectly helps in restricting $\mathcal{C}_d$ to a smaller region. This will be more clear from the examples below.


\begin{example}\label{ex:arctan}
Consider the system
\begin{align}
 \dot x = - \tan^{-1}(x) + u,
\end{align}
with the following CLF and CBF candidates:
\begin{align}
\label{eq:clfcbfarctan}
&{\rm{CLF}}: \quad V(x) = x \tan^{-1}(x) \nonumber \\
&{\rm{CBF}}: \quad h(x) = 4 - x^2. 
\end{align}
It can be verified that $V$ is a valid Lyapunov function for $u \equiv 0$ \cite[pp. 4]{angeli2000characterization}, hence a valid CLF. 
The CBF $h$ ensures that the state $x$ stays in the interval $[-2,2]$. It can also be verified that $h$ is a valid CBF:
\begin{align}
\resizebox{1\hsize}{!}{$
 \dot h(x,u) = 2 x \tan^{-1} x -2 x u \geq -2(2\tan^{-1} 2 - x \tan^{-1} x) -2 x u .\nonumber
 $}
 \end{align}
 Here $u$ can be suitably picked in such a way that $\dot h(x,u) \geq - \alpha(h(x))$,
where $\alpha(h(x)):=2(2\tan^{-1} 2 - x \tan^{-1} x)$, which is a valid extended class $\classK$ function w.r.t. $h$. A controller of the form \eqref{eq:CLFCBFQP} will not guarantee ISSf of $\mathcal{C} = \{ x\in \R :  4 - x^2 \geq 0\}$ (take $x(0)=0.1$, $u=0$ and $d(t)=10$). On the other hand, the controller of the form \eqref{eq:ISSfCLFCBFQP}, indeed, yields ISSf of $\mathcal{C}$. \figref{fig:arctanexample} shows comparisons for the controller of the type \eqref{eq:ISSfCLFCBFQP} with different values of $d$ and $\epsilon$.
\end{example}
\begin{example}\label{ex:robot}
Consider a 2-DOF robot example given by \figref{fig:robotarm}. We have the following dynamics:
\begin{align}\label{eq:dynamicsrobot}
    \begin{bmatrix}
      m r^2  + \frac{M L^2}{3} & 0 \\
      0 & m    
    \end{bmatrix} \begin{bmatrix} 
		    \ddot \theta \\ 
		    \ddot r 
		  \end{bmatrix} + \begin{bmatrix}  2 m r \dot r \dot \theta \\ - m r \dot \theta^2 \end{bmatrix} = u =  \begin{bmatrix} \tau \\ T \end{bmatrix},
\end{align}
where $m=1$ $kg$ is the mass of the second link, $M=1 \: kg$ is the mass of the longer link with length $L=3\:m$, $\theta\:rad$ is the rotation, and $r$ is the linear displacement along the axis. $\tau$ is the torque and $T$ is the force acting on the corresponding joints. The linear displacement $r$ has a limit $r^*=2\:m$.

The goal is to drive the configuration $q = (\theta, r)$ to some constant desired values $q_d = (\theta_d, r_d)$. CLF and CBF candidates are
\begin{align}
\label{eq:robotclfcbf}
&{\rm{CLF}}: \quad V(x) = (q - q_d)^T K_p (q - q_d) + \dq^T D(\q) \dq \nonumber \\
&{\rm{CBF}}: \quad h(x) = r^* - r,  
\end{align}
where $x=(q,\dq)$, $D$ is the inertia matrix obtained from \eqref{eq:dynamicsrobot}, and $K_p$ is a diagonal gain matrix (chosen to be identity). 
In order to verify that $V$ is a valid CLF, we need to determine a control law $u$ that results in asymptotic convergence of $V(x(t))$. In fact, PD control laws are sufficient for asymptotic convergence of $V$ \cite{arimoto1984stability}. Therefore, we have the following CLF based semi-definite constraint along with the relaxation $\delta$:
\begin{align}
	L_f V (x) + L_g V(x) u \leq - \dq^T K_d \dq + \delta,
\end{align}
with the gain $K_d$ chosen to be identity.
\begin{figure}[t!]\centering
	\includegraphics[height=0.4\columnwidth]{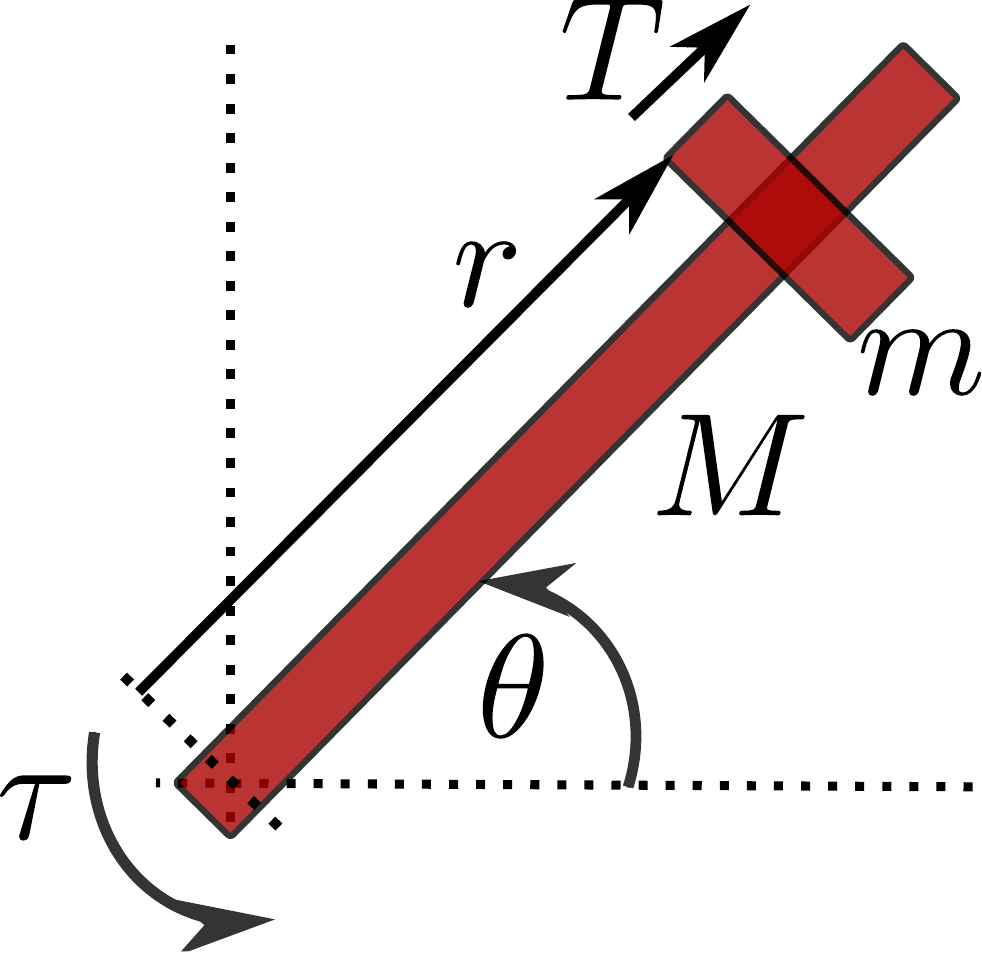}
	\includegraphics[height=0.4\columnwidth]{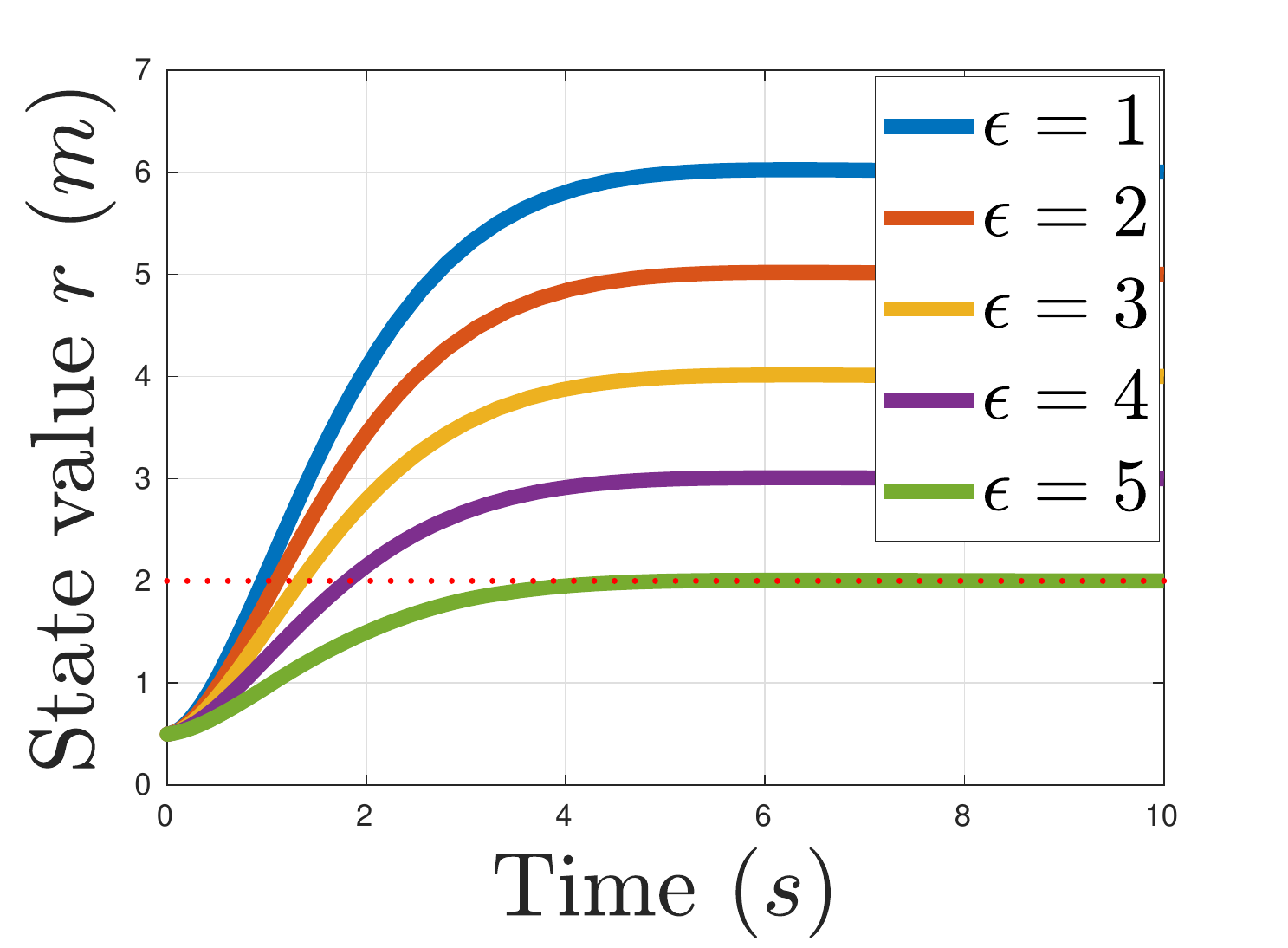}
	\caption{Left figure is showing a 2-DOF robotic system. Gravity is not included in the model. Right figure is showing the plots of $r(t)$ w.r.t. time for different values of $\epsilon$.}
	\label{fig:robotarm}
\end{figure}

Having obtained the CLF based constraint, we now determine a suitable ISSf-CBF based constraint.
The goal is to ensure ISSf of $\mathcal{C} = \{ (\theta,r,\dot \theta,\dot r) : r^* - r \geq 0\}$ under bounded disturbances.
Since $h$ (defined by \eqref{eq:robotclfcbf}) is relative degree two, we will choose {\it exponential} type of barrier functions \cite{7524935}. We have the following ISSf-CBF based constraint
\begin{align}
 \label{eq:ebf}
 & \mu_b = - [k_p  \:\: k_d ]\eta_b , \quad \eta_b =  \begin{bmatrix} h \\ L_f h \end{bmatrix} \nonumber \\
 & L_f^2 h(x) + L_g L_f h(x) u   - \epsilon L_g L_f h(x) L_g L_f h(x)^T \geq \mu_b,
\end{align}
for some positive constants $k_p,k_d, \epsilon$. We have a new ISSf-QP formulation with this new constraint instead of \eqref{eq:ISSfCBF}. \figref{fig:robotarm} shows the response of $r$ for this new controller as a function of time. Responses are shown for different values of $\epsilon$ subject to disturbance $d=5$. We chose $k_p=1$, $k_d=1.7321$. Note that $\mathcal{C}_d$ shrinks for larger values of $\epsilon$. 
%
\end{example}



\section{Conclusions}
In this letter we formally defined the notion of {\it input-to-state safety} w.r.t. sets, and the associated {\it input-to-state safe control barrier functions} that ensure forward invariance of sets under input disturbances. We have also presented methods to construct ISSf-CBFs from the existing CBF formulation. Theorem \ref{lm:zcbflemma1} is exactly in the same flavor of \cite{sontag1989smooth}, wherein, for affine control systems, input-to-state stabilizing controllers were constructed via Lyapunov functions i.e., $k(x) - L_g V(x)^T$. 
Future work will involve a detailed analysis of the different properties of ISSf-CBFs. 
The hope is that the formulations presented will lay the groundwork for safety-critical control that is robust to disturbances.

\bibliographystyle{IEEEtran}
\bibliography{bibdata}

\end{document}